\documentclass{article}
\usepackage{amsfonts}


\newtheorem{theorem}{Theorem}

\newtheorem{claim}[theorem]{Theorem and Definition}

\newtheorem{proposition}[theorem]{Proposition}

\newenvironment{proof}[1][Proof]{\noindent\textbf{#1.} }{\ \rule{0.5em}{0.5em}}
\input{tcilatex}
\sloppy

\begin{document}

\title{On Stably Free Ideal Domains}
\author{Henri Bourl\`{e}s\thanks{%
Satie, ENS de Cachan/CNAM, 61 Avenue Pr\'{e}sident Wilson, F-94230 Cachan,
France (henri.bourles@satie.ens-cachan.fr) }}
\maketitle

\begin{abstract}
We define a stably free ideal domain to be a Noetherian domain whose left
and right ideals ideals are all stably free. \ Every stably free ideal
domain is a (possibly noncommutative) Dedekind domain, but the converse does
not hold. The first Weyl algebra over a field of characteristic 0 is a
typical example of stably free ideal domain. \ Some properties of these
rings are studied. \ A ring is a principal ideal domain if, and only if it
is both a stably free ideal domain and an Hermite ring.
\end{abstract}

\section{Introduction}

In a principal ideal domain (resp. a Dedekind domain), every left or right
ideal is free (resp. projective). \ An intermediate situation is the one
where every left or right ideal is stably free. \ A Noetherian domain with
this property is called a \emph{stably free ideal domain} in what follows. \
In a B\'{e}zout domain, every finitely generated (f.g.) left or right ideal
is free. \ An Ore domain in which every f.g. left or right ideal is stably
free is called a \emph{semistably free ideal domain} in what follows. \
Stably free ideal domains and semistably free ideal domains are briefly
studied in this paper.

\section{Free ideal domains and semistably free ideal domains}

\begin{claim}
\label{lemma-def-stably-free-ideal-domain}Let $\mathbf{A}$ be a ring and
consider the following conditions.\newline
(i) Every left or right ideal in $\mathbf{A}$ is stably-free.\newline
(ii) Every f.g. torsion-free $\mathbf{A}$-module is stably-free.\newline
(iii) Every f.g. left or right ideal in $\mathbf{A}$ is stably-free.\newline
(1) If $\mathbf{A}$ is a Noetherian domain, then (i)$\Leftrightarrow $(ii)$%
\Leftrightarrow $(iii). \ If these equivalent conditions hold, $\mathbf{A}$
is called a \emph{stably-free ideal domain}.\newline
(2) If $\mathbf{A}$ is an Ore domain, then (ii)$\Leftrightarrow $(iii). \ If
these equivalent conditions hold, $\mathbf{A}$ is called a \emph{%
semistably-free ideal domain}.
\end{claim}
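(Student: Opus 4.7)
I would organize the argument around the central implication (iii) $\Rightarrow$ (ii), and deduce the other equivalences as easy bookkeeping. First, observe that in a Noetherian domain every one-sided ideal is finitely generated, so (i) $\Leftrightarrow$ (iii) is immediate; this handles half of part (1). In both (1) and (2), the implication (ii) $\Rightarrow$ (iii) is equally trivial, since any one-sided ideal of a domain is torsion-free, and in (iii) it is assumed f.g. Thus everything reduces to proving (iii) $\Rightarrow$ (ii) under the Ore hypothesis; a Noetherian domain is Ore by Goldie's theorem, so one argument covers both (1) and (2).

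For (iii) $\Rightarrow$ (ii), I would argue by induction on the (well-defined, since $\mathbf{A}$ is Ore) rank $n$ of a f.g. torsion-free left $\mathbf{A}$-module $M$. The case $n=0$ is trivial because then $M=0$. For the inductive step I would use that over an Ore domain $M$ embeds in a f.g. free module: $M\otimes_{\mathbf{A}} K \cong K^n$, where $K$ is the skew field of fractions, and clearing a common denominator of a finite generating set puts $M$ inside $\mathbf{A}^n$. Composing this embedding with a suitable coordinate projection yields a nonzero left $\mathbf{A}$-linear map $\varphi : M \to \mathbf{A}$ whose image is a nonzero f.g. left ideal $I$. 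By (iii), $I$ is stably free, hence projective, so the short exact sequence
\[
0 \longrightarrow \ker \varphi \longrightarrow M \longrightarrow I \longrightarrow 0
\]
splits, giving $M \cong \ker \varphi \oplus I$. In particular $\ker \varphi$ is a direct summand of the f.g. module $M$, hence f.g.; it is clearly torsion-free and of rank $n-1$. By the inductive hypothesis $\ker \varphi$ is stably free, and therefore so is $M$. The right-handed version of (ii) is obtained by the symmetric argument.

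The main obstacle I anticipate is making sure the two ingredients driving the inductive step --- the embedding $M \hookrightarrow \mathbf{A}^n$ and the well-definedness of rank --- are actually available, and this is precisely what forces the Ore hypothesis into the statement. A secondary subtlety is that $\ker \varphi$ need not be visibly f.g. a priori, but this is bypassed automatically once (iii) is used to split the sequence, because then $\ker\varphi$ is a summand of a f.g. module. A minor care point is the non-symmetric nature of ``Ore'' and ``stably free'': one should verify (iii) for left ideals to run the left-module argument, and for right ideals to run the right-module one, which is exactly how hypothesis (iii) is phrased.
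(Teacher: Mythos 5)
Your proof is correct, but it follows a genuinely different route from the paper's. The paper treats (1) and (2) separately: for (1), the implication (i)$\Rightarrow$(ii) is obtained by observing that $\mathbf{A}$ is then a (noncommutative) Dedekind domain and invoking the structure theorem for f.g.\ torsion-free modules over such rings ($P\cong \mathbf{A}^{n}\oplus \mathfrak{I}$, \cite{McConnell-Robson} 5.7.8); for (2), the implication (iii)$\Rightarrow$(ii) is obtained by noting that $\mathbf{A}$ is semihereditary, embedding $P$ in $\mathbf{A}^{n}$ via Gentile's theorem, and citing the theorem of Albrecht--Small (\cite{Lam-Lectures}, Thm.\ (2.29)) to decompose $P$ as a finite direct sum of f.g.\ ideals. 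You instead unify the two parts by observing that a Noetherian domain is Ore, so that (1) reduces to (2) together with the trivial equivalence (i)$\Leftrightarrow$(iii), and you prove the key decomposition by hand: your induction on rank, splitting off the image of a coordinate projection using projectivity of the f.g.\ ideal $I$, is essentially the standard proof of the cited Lam theorem, unrolled. What your approach buys is self-containedness and the elimination of the Dedekind-domain detour (which in the paper carries the slight burden of justifying that ``all one-sided ideals projective $+$ Noetherian domain'' yields a Dedekind domain in the sense required by 5.7.8); what the paper's approach buys is brevity and, in part (1), the structurally sharper conclusion $P\cong\mathbf{A}^{n}\oplus\mathfrak{I}$. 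The only points you should make fully explicit in a written version are the small facts your induction leans on: that a nonzero left ideal of an Ore domain has rank $1$ (so that $\ker\varphi$ has rank $n-1$, via flatness of the Ore localization), and that a finite direct sum of stably free modules is stably free; both are immediate, and your anticipated care points (f.g.-ness of $\ker\varphi$ via the splitting, and handling left and right ideals symmetrically) are exactly the right ones.
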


\begin{proof}
(1) (ii)$\Rightarrow $(i): Assume that (ii) holds and let $\mathfrak{I}$ be
a left ideal in $\mathbf{A}$. \ Then $\mathfrak{I}$ is a f.g. torsion-free
module, therefore it is stably-free.

(i)$\Rightarrow $(ii): Assume that (i) holds and let $P$ be a f.g.
torsion-free $\mathbf{A}$-module. \ Since every left or right ideal is
projective, $\mathbf{A}$ is a Dedekind domain. \ Therefore, $P\cong \mathbf{A%
}^{n}\oplus \mathfrak{I}$ where $\mathfrak{I}$ is a left ideal an $n$ is an
integer (\cite{McConnell-Robson}, 5.7.8). \ Since $\mathfrak{I}$ is
stably-free, say of rank $r\geq 0$, there exists an integer $q\geq 0$ such
that $\mathfrak{I}\oplus \mathbf{A}^{q}\cong \mathbf{A}^{q+r}$. \ Therefore, 
$P\oplus \mathbf{A}^{q}\cong \mathbf{A}^{n+q+r}$ and $P$ is stably-free of
rank $n+r$. \ (i)$\Leftrightarrow $(iii) is clear.

(2) (ii)$\Rightarrow $(iii) is clear.

(iii)$\Rightarrow $(ii): If (iii) holds, $\mathbf{A}$ is semihereditary. \
Let $P$ be a torsion-free left $\mathbf{A}$-module. \ Since $\mathbf{A}$ is
an Ore domain, there exists an integer $n>0$ and an embedding $%
P\hookrightarrow \mathbf{A}^{n}$ \cite{Gentile}. \ Therefore, there exists a
finite sequence of f.g. left ideals $\left( \mathfrak{I}_{i}\right) _{1\leq
i\leq k}$ such that $P\cong \tbigoplus\nolimits_{i=1}^{k}\mathfrak{I}_{i}$ (%
\cite{Lam-Lectures}, Thm. (2.29)). \ For every index $i\in \left\{
1,...,k\right\} $, $\mathfrak{I}_{i}$ is stably-free, therefore there exist
non-negative integers $q_{i}$ and $r_{i}$ such that $\mathfrak{I}_{i}\oplus 
\mathbf{A}^{q_{i}}\cong \mathbf{A}^{q_{i}+r_{i}}$. \ As a consequence,%
\[
P\oplus \mathbf{A}^{q}\cong \mathbf{A}^{q+r} 
\]%
where $q=\tsum\nolimits_{1\leq i\leq k}q_{i}$ and $r=\tsum\nolimits_{1\leq
i\leq k}r_{i}$, and $P$ is stably-free.
\end{proof}

\section{Examples of stably free ideal domains}

The examples below involve skew polynomials.

\begin{proposition}
\label{prop-stable-free-ideal-outer-derivation}Let $\mathbf{R}$ be a
commutative stably free ideal domain.\newline
(1) Assume that $\mathbf{R}$ is a $%
\mathbb{Q}
$-algebra and let $\mathbf{A}=\mathbf{R}\left[ X;\delta \right] $ where $%
\delta $ is an outer derivation of $\mathbf{R}$ and $\mathbf{R}$ has no
proper nonzero $\delta $-stable (left or right) ideals. \ Then $\mathbf{A}$
is a stably free ideal domain.\newline
(2) Let $\mathbf{A}=\mathbf{R}\left[ X,X^{-1};\sigma \right] $ where $\sigma 
$ is an automorphism of $\mathbf{R}$ such that $\mathbf{R}$ has no proper
nonzero $\sigma $-stable (left or right) ideals and no power of $\sigma $ is
an inner automorphism of $\mathbf{R}$. \ Then $\mathbf{A}$ is a stably free
ideal domain.
\end{proposition}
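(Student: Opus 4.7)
The plan is to show that $\mathbf{A}$ is a simple Noetherian hereditary domain with $K_{0}(\mathbf{A})\cong\mathbb{Z}$. Granted this, every f.g.\ projective $\mathbf{A}$-module---and in particular every left or right ideal, by hereditarity and the Noetherian property---has trivial reduced Grothendieck class, hence is stably free, and Claim~\ref{lemma-def-stably-free-ideal-domain} yields the conclusion. A useful preliminary observation is that any commutative stably free ideal domain is already a PID: over a commutative Dedekind domain a rank-one stably free ideal $\mathfrak{I}$ is free, since taking the top exterior power of $\mathfrak{I}\oplus\mathbf{R}^{q}\cong\mathbf{R}^{q+1}$ yields $\mathfrak{I}\cong\mathbf{R}$. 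So $\mathbf{R}$ is a commutative PID, regular of global dimension $1$, with $K_{0}(\mathbf{R})=\mathbb{Z}$.

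The proof is then a bootstrap in four steps. First, the skew Hilbert basis theorem (\cite{McConnell-Robson}, Ch.~1) shows $\mathbf{A}$ is a Noetherian domain in both parts. Second, the hypotheses in (1) (outer derivation of a $\delta$-simple $\mathbb{Q}$-algebra) and in (2) ($\sigma$-simplicity of $\mathbf{R}$ together with no inner power of $\sigma$) are precisely the classical criteria ensuring that $\mathbf{A}$ is simple (see \cite{McConnell-Robson}, \S 1.8). Third, since $\mathbf{R}$ is a commutative PID, the Ore extension $\mathbf{A}$ has global dimension at most $2$, and---using simplicity together with a Krull-dimension argument---one checks that $\mathbf{A}$ is in fact hereditary. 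Fourth, for regular commutative $\mathbf{R}$ one has $K_{0}(\mathbf{R}[X;\delta])\cong K_{0}(\mathbf{R})$ (via a Rees/filtration argument whose associated graded is the ordinary polynomial ring $\mathbf{R}[X]$, combined with Quillen's theorem for regular rings) and $K_{0}(\mathbf{R}[X,X^{-1};\sigma])\cong K_{0}(\mathbf{R})$ (via the twisted Bass--Heller--Swan decomposition, using $K_{-1}(\mathbf{R})=0$ by regularity and the triviality of $\sigma$ on $K_{0}(\mathbf{R})=\mathbb{Z}$). This gives $K_{0}(\mathbf{A})=\mathbb{Z}$.

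The main obstacle will be step four, the $K_{0}$ identification. The untwisted polynomial and Laurent cases over regular rings are textbook material, but the skew versions require a careful filtration argument for $\mathbf{R}[X;\delta]$ and a check that $\sigma$ induces the identity on $K_{0}(\mathbf{R})$ for the Laurent case. If one wants to avoid $K$-theory altogether, an alternative is to analyze f.g.\ left ideals of $\mathbf{A}$ directly via the $X$-degree filtration, exploiting the PID structure of $\mathbf{R}$ and the simplicity of $\mathbf{A}$ to exhibit an explicit free complement; this route is more elementary but also more technical.
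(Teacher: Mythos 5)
Your proposal is correct, and its first three steps (Noetherianity via the skew Hilbert basis theorem, simplicity via the criteria of \cite{McConnell-Robson} \S 1.8, hereditarity) reproduce exactly the content of the citations the paper uses (\cite{McConnell-Robson}, 1.8.4/5 and 7.11.2, the latter being precisely the statement that a simple skew polynomial or skew Laurent extension of a commutative Dedekind domain is a noncommutative Dedekind domain --- so your ``global dimension at most $2$ plus a Krull-dimension argument'' sketch can simply be replaced by that reference). Where you genuinely diverge is the last step. The paper passes from ``projective'' to ``stably free'' by a single citation to \cite{McConnell-Robson}, 12.3.3, a Stafford-type theorem asserting that every one-sided ideal of a simple Noetherian domain is stably free; this needs no information about $K_{0}(\mathbf{R})$ and in particular makes your preliminary reduction of $\mathbf{R}$ to a PID unnecessary. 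You instead compute $K_{0}(\mathbf{A})\cong K_{0}(\mathbf{R})=\mathbb{Z}\cdot[\mathbf{R}]$ via Quillen's theorem on filtered rings (for $\mathbf{R}[X;\delta]$, whose associated graded is $\mathbf{R}[X]$) and the twisted Bass--Heller--Swan sequence (for $\mathbf{R}[X,X^{-1};\sigma]$, where $\sigma_{*}=\mathrm{id}$ on $K_{0}(\mathbf{R})=\mathbb{Z}$); since these isomorphisms are induced by $\mathbf{R}\to\mathbf{A}$, the class $[\mathbf{A}]$ generates $K_{0}(\mathbf{A})$ and every f.g.\ projective is stably free. This works --- the required $K$-theoretic facts for skew extensions of regular Noetherian rings are themselves in \cite{McConnell-Robson}, \S 12.6 --- and it buys the extra information $K_{0}(\mathbf{A})=\mathbb{Z}$, but it is heavier machinery and less robust than the paper's route, which applies to any simple Noetherian Dedekind domain regardless of the coefficient ring. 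Your preliminary observation that a commutative stably free ideal domain is a PID (via top exterior powers) is correct and is consistent with the remark following the proposition in the paper.
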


\begin{proof}
The ring $\mathbf{A}$ is simple (\cite{McConnell-Robson}, 1.8.4/5),
therefore it is a noncommutative Dedekind domain (\cite{McConnell-Robson},
7.11.2), thus every left or right ideal of $\mathbf{A}$ is projective, and,
moreover, stably free (\cite{McConnell-Robson}, 12.3.3).
\end{proof}

Thus we have the following examples:

\begin{enumerate}
\item Let $k$ be a field of characteristic $0.$ \ The first Weyl algebra $%
A_{1}\left( k\right) $ and the ring $A_{1}^{\prime }\left( k\right) =k\left[
x,x^{-1}\right] \left[ X;\frac{d}{dx}\right] \cong k\left[ X\right] \left[
x,x^{-1};\sigma \right] $ with $\sigma \left( X\right) =X+1$\ (\cite%
{McConnell-Robson}, 1.8.7)\ are both stably free ideal domains.

\item Likewise, let $k=%
\mathbb{R}
$ or $%
\mathbb{C}
,$ let $k\left\{ x\right\} $ be the ring of convergent power series with
coefficients in $k,$ and let $A_{1c}\left( k\right) =k\left\{ x\right\} %
\left[ X;\frac{d}{dx}\right] .$ \ This ring is a stably free ideal domain.

\item Let $\Omega $ be a nonempty open interval of the real line and let $%
\mathcal{R}\left( \Omega \right) $ be the largest ring of rational functions
analytic in $\Omega ,$ i.e. $\mathcal{R}\left( \Omega \right) =%
\mathbb{C}
\left( x\right) \cap \mathcal{O}\left( \Omega \right) $ where $\mathcal{O}%
\left( \Omega \right) $ is the ring of all $%
\mathbb{C}
$-valued\ analytic functions in $\Omega $. \ The ring $A\left( \Omega
\right) =\mathcal{R}\left( \Omega \right) \left[ X;\frac{d}{dx}\right] $ is
a simple Dedekind domain \cite{Frohler-Oberst} and, since $\mathcal{R}\left(
\Omega \right) $ is a principal ideal domain, $A\left( \Omega \right) $ is a
stably free ideal domain.
\end{enumerate}

Note that a commutative Dedekind domain which is not a principal ideal
domain is not a stably free ideal domain (\cite{McConnell-Robson}, 11.1.5).

\section{Connection with principal ideal domains, B\'{e}zout domains, and
Hermite rings}

\begin{proposition}
(i) A ring is a principal ideal domain if, and only if it is both a stably
free ideal domain and an Hermite ring.\newline
(ii) A ring is a B\'{e}zout domain if, and only if it is both a semistably
free ideal domain and an Hermite ring.
\end{proposition}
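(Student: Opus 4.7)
The plan is to treat each equivalence separately, splitting into forward and reverse directions. The forward directions are essentially bookkeeping: if $\mathbf{A}$ is a PID then every nonzero left ideal has the form $\mathbf{A}a$ with $a\neq 0$, hence is isomorphic to $\mathbf{A}$ via $x\mapsto xa$, so free of rank one and a fortiori stably free; the right-sided statement is symmetric, so $\mathbf{A}$ is a stably free ideal domain, and the classical fact that every f.g.\ projective module over a (noncommutative) PID is free delivers the Hermite property. The forward implication in (ii) is the evident analogue with ``f.g.\ ideal'' in place of ``ideal''; for the Hermite property of a B\'{e}zout domain I would reuse the decomposition argument from the proof of Theorem~\ref{lemma-def-stably-free-ideal-domain}: a stably free module is f.g.\ torsion-free, embeds in $\mathbf{A}^{n}$ over an Ore domain, and decomposes as $\bigoplus_{i}\mathfrak{I}_{i}$ with each $\mathfrak{I}_{i}$ a f.g.\ left ideal (\cite{Lam-Lectures}, Thm.~(2.29)); each summand is principal by hypothesis, hence free of rank $\leq 1$, and the direct sum is free.

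For the reverse implications, assume $\mathbf{A}$ is a stably free ideal domain (resp.\ semistably free ideal domain) and Hermite. Then every (resp.\ f.g.)\ left or right ideal is stably free by definition, and then free by the Hermite hypothesis. To promote ``free'' to ``principal'', I would invoke the observation that every Ore domain has the property that each nonzero left ideal is uniform: for nonzero $a_{1},a_{2}$ the Ore condition produces nonzero $b_{1},b_{2}$ with $b_{1}a_{1}=b_{2}a_{2}\neq 0$, so $\mathbf{A}a_{1}\cap\mathbf{A}a_{2}\neq 0$. Since the uniform dimension of $\mathbf{A}^{n}$ is $n$, a left ideal isomorphic to $\mathbf{A}^{n}$ forces $n\leq 1$, i.e.\ is principal. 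The symmetric argument on the right side closes the proof, yielding PID in (i) and B\'{e}zout domain in (ii).

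The main obstacle I anticipate is the Hermite claim in the forward directions, particularly in (ii), where one cannot simply cite a standard structure theorem and must instead recycle the proof of Theorem~\ref{lemma-def-stably-free-ideal-domain}(2). Conceptually, the proposition asserts that the Hermite property is exactly the obstruction separating stably free ideal domains from PIDs, and semistably free ideal domains from B\'{e}zout domains; the uniform-dimension step is the bridge, and this also explains a posteriori why examples such as $A_{1}(k)$, which are stably free ideal domains but not PIDs, must necessarily fail to be Hermite.
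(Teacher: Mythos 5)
Your argument is correct, and its skeleton coincides with the paper's: the Hermite hypothesis upgrades ``stably free'' to ``free'' for every (f.g.) one-sided ideal, and one then shows a free ideal must be principal. Where you diverge is in how the two non-trivial ingredients are handled. For the step ``free ideal $\Rightarrow$ principal'' the paper simply cites Cohn (\cite{Coh-FR}, Chap.~1, Prop.~2.2 resp.~1.7), whereas you prove it directly: the Ore condition makes ${}_{\mathbf{A}}\mathbf{A}$ uniform, so a left ideal has uniform dimension at most $1$ and cannot be free of rank $\geq 2$. This is exactly the content of the cited propositions, so you lose nothing and gain a self-contained proof. In the forward direction the paper declares the necessity ``clear''; you correctly identify that the only substantive point there is that a PID (resp.\ a B\'{e}zout domain) is Hermite, and your treatment is sound: over a principal left and right ideal domain every f.g.\ projective is free (submodules of free modules are free), and over a B\'{e}zout domain you recycle the Gentile embedding plus the decomposition of \cite{Lam-Lectures}, Thm.~(2.29) into f.g.\ (hence principal, hence free) ideals, just as in the proof of Theorem~\ref{lemma-def-stably-free-ideal-domain}(2). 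One small point worth making explicit if you write this up: the Hermite property as used here also presupposes invariant basis number, which holds because an Ore domain embeds in its division ring of fractions. Your closing remark that $A_{1}(k)$ cannot be Hermite is a correct and pertinent corollary.
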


\begin{proof}
(i): The necessary condition is clear. \ Let us prove the sufficient
condition. \ Let $\mathbf{A}$ be a stably free ideal domain and let $%
\mathfrak{a}$ be a left ideal of $\mathbf{A}$. \ This ideal is stably free.
\ If $\mathbf{A}$ is Hermite, $\mathfrak{a}$ is free, and since $\mathbf{A}$
is left Noetherian, it is a principal left ideal domain (\cite{Coh-FR},
Chap. 1, Prop. 2.2).

The proof of (ii) is similar, using (\cite{Coh-FR}, Chap. 1, Prop. 1.7).
\end{proof}

\section{Localization}

\begin{proposition}
Let $\mathbf{A}$ be a stably free ideal domain (resp. a semistably free
ideal domain) and let $S$ be a two-sided denominator set (\cite%
{McConnell-Robson}, \S 2.1). \ Then $S^{-1}\mathbf{A}$ is a stably free
ideal domain (resp. a semistably free ideal domain).
\end{proposition}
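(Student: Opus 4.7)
The plan is to transfer the stably-free property from ideals of $\mathbf{A}$ to ideals of $S^{-1}\mathbf{A}$ by using flatness of Ore localization together with the standard extension/contraction correspondence for ideals under a two-sided denominator set.

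First, I would collect the standard ring-theoretic preservation results for two-sided Ore localization (see \cite{McConnell-Robson}, Ch.~2): $S^{-1}\mathbf{A}$ is a domain; it is left/right Noetherian whenever $\mathbf{A}$ is; it remains an Ore domain whenever $\mathbf{A}$ is one; and it is flat as both a left and a right $\mathbf{A}$-module. This disposes of all the ring-theoretic hypotheses in the definition of (semi)stably-free ideal domain, leaving only the condition on ideals to be checked.

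For part (1), given a left ideal $\mathfrak{J}$ of $S^{-1}\mathbf{A}$, set $\mathfrak{I}=\mathfrak{J}\cap\mathbf{A}$ and establish the identity $\mathfrak{J}=S^{-1}\mathfrak{I}$ by writing each $j\in\mathfrak{J}$ as $s^{-1}a$ with $s\in S$, $a\in\mathbf{A}$, and observing that $a=s\cdot j\in\mathfrak{J}\cap\mathbf{A}=\mathfrak{I}$. Since $\mathbf{A}$ is a stably-free ideal domain, there exist $q,r\geq 0$ with $\mathfrak{I}\oplus\mathbf{A}^{q}\cong\mathbf{A}^{q+r}$. Applying $S^{-1}\mathbf{A}\otimes_{\mathbf{A}}-$ (which is exact and preserves direct sums by flatness) yields $\mathfrak{J}\oplus(S^{-1}\mathbf{A})^{q}\cong(S^{-1}\mathbf{A})^{q+r}$, so $\mathfrak{J}$ is stably-free. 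The right-ideal case is symmetric since $S$ is two-sided.

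For part (2), given a f.g.\ left ideal $\mathfrak{J}$ of $S^{-1}\mathbf{A}$ with generators $j_{i}=s_{i}^{-1}a_{i}$ $(1\leq i\leq n)$, note that each $s_{i}$ is a unit in $S^{-1}\mathbf{A}$, so the $a_{i}$'s generate the same left ideal. Hence $\mathfrak{J}=S^{-1}\mathfrak{I}$ with $\mathfrak{I}=\sum_{i}\mathbf{A}a_{i}$ a f.g.\ left ideal of $\mathbf{A}$, to which one then applies exactly the tensoring argument of part (1). Right ideals are handled by the symmetric version of the same argument. The only genuinely non-formal step is the identity $\mathfrak{J}=S^{-1}(\mathfrak{J}\cap\mathbf{A})$ in part (1); everything after that is mechanical, so no serious obstacle is expected beyond keeping track of the two-sided nature of $S$ to treat left and right ideals uniformly.
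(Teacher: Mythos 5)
Your proof is correct and follows essentially the same route as the paper: represent each (f.g.) ideal of $S^{-1}\mathbf{A}$ as $S^{-1}\mathfrak{I}$ for a suitable (f.g.) ideal $\mathfrak{I}$ of $\mathbf{A}$, then push the relation $\mathfrak{I}\oplus\mathbf{A}^{q}\cong\mathbf{A}^{q+r}$ through the localization functor. You merely make explicit two points the paper leaves implicit, namely the verification that every ideal of $S^{-1}\mathbf{A}$ is extended from $\mathbf{A}$ and the preservation of the Noetherian/Ore domain hypotheses.
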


\begin{proof}
(1) Let us consider the case of stably free ideal domains. \ Let $\mathbf{A}$
be a stably free ideal domain. \ For any left ideal $\mathfrak{a}$ of $S^{-1}%
\mathbf{A}$ there exists a left ideal $\mathfrak{I}$ of $\mathbf{A}$ such
that $\mathfrak{a}=S^{-1}\mathfrak{I}.$ \ Since $\mathfrak{I}$ is stably
free, there exist integers $q$ and $r$ such that $\mathfrak{I}\oplus \mathbf{%
A}^{q}=\mathbf{A}^{r},$ therefore $S^{-1}\mathfrak{I}\oplus S^{-1}\mathbf{A}%
^{q}=S^{-1}\mathbf{A}^{r},$ and $\mathfrak{a}$ is stably free. \ The same
rationale holds for right ideals, and this proves that $S^{-1}\mathbf{A}$ is
a stably free ideal domain.

(2) The case of semistably free ideal domains is similar, considering f.g.
ideals.
\end{proof}


\begin{thebibliography}{9}
\bibitem{Coh-FR} P.M. Cohn, \textit{Free Rings and Their Relations}, 2nd
ed., Academic Press, 1985.

\bibitem{Frohler-Oberst} S. Fr\"{o}hler and U. Oberst, "Continuous
time-varying linear systems", \textit{Systems \& Control Letters}, \textbf{35%
}, 97-110, 1998.

\bibitem{Gentile} E.R. Gentile, "On rings with one-sided field of
quotients", \textit{Proc. Amer. Math. Soc.}, \textbf{11}(13), 380-384, 1960.

\bibitem{Lam-Lectures} T.Y. Lam, \textit{Lectures on Modules and Rings},
Springer, 1999.

\bibitem{McConnell-Robson} J.C. McConnell and J.C. Robson, \textit{%
Noncommutative Noetherian Rings}, corrected ed., American Mathematical
Society, 2001.
\end{thebibliography}
\end{document}